  \newcommand{\Z}{\ensuremath{\mathbb{Z}}}%
	\newcommand{\Q}{\ensuremath{\mathbb{Q}}}%
                \renewcommand{\P}{\ensuremath{\mathcal{P}}}%
\theoremstyle{definition}
  \newtheorem{defin}{Definition}[section]
	 \newtheorem{notation}[defin]{Notation}
\theoremstyle{plain}
  \newtheorem{thm}[defin]{Theorem}
  \newtheorem*{thm-other}{Theorem}
  \newtheorem{prop}[defin]{Proposition}
    \newtheorem{prop-def}[defin]{Proposition-Definition}
  \newtheorem{cor}[defin]{Corollary}
   \newtheorem{lem}[defin]{Lemma}
\theoremstyle{remark}
  \newtheorem{remark}[defin]{Remark}
\begin{document}

  \date{August 29, 2023}

\title{Some torsion-free solvable groups with few subquotients}

\author{Adrien Le Boudec}
\address{CNRS, UMPA - ENS Lyon, 46 all\'ee d'Italie, 69364 Lyon, France}
\email{adrien.le-boudec@ens-lyon.fr}

\author{Nicol\'as Matte Bon}
\address{
	CNRS,
	Institut Camille Jordan (ICJ, UMR CNRS 5208),
	Universit\'e de Lyon,
	43 blvd.\ du 11 novembre 1918,	69622 Villeurbanne,	France}

\email{mattebon@math.univ-lyon1.fr}

\thanks{Supported by the LABEX MILYON (ANR-10-LABX-0070) of Universit\'e de Lyon, within the program "Investissements d'Avenir" (ANR-11-IDEX-0007) operated by the French National Research Agency.}

\maketitle

\begin{abstract}
We construct finitely generated torsion-free solvable groups $G$ that have infinite rank, but such that all finitely generated torsion-free metabelian subquotients of $G$ are virtually abelian. In particular all finitely generated metabelian subgroups of $G$ are virtually abelian. The existence of such groups shows that there is no ``torsion-free version'' of P. Kropholler's theorem, which characterises solvable groups of infinite rank via their metabelian subquotients.
\end{abstract}

\section{Introduction}

A solvable group $G$ has \textbf{finite rank} if there is $k \geq 1$ such that all finitely generated (fg) subgroups of $G$ are generated by at most $k$ elements.  The class of groups of finite rank is stable under the operations of taking subgroups and quotients, and hence under taking subquotients. Recall that a group $Q$ is a \textbf{subquotient} of $G$ if there are subgroups $K,H$ of $G$ with $K \lhd H$ and $Q \simeq H/K$ (the term \enquote{section} is also used, especially in the literature on solvable groups). The simplest example of a fg solvable group of infinite rank is the wreath product $C_p \wr \Z$. The following celebrated theorem of P. Kropholler asserts that looking at the metabelian subquotients of a  fg solvable group $G$ suffices to detect that $G$ has infinite rank.

\begin{thm-other}[\cite{Kropholler84}]
	If $G$ is a fg solvable group of infinite rank, then $G$ admits a subquotient isomorphic to $C_p \wr \Z$ for some prime $p$. 
\end{thm-other}

One of the motivating questions of the present article is whether there exists a \enquote{torsion-free version} of this theorem. More precisely, if $G$ is a  fg  torsion-free solvable group of infinite rank, does $G$ always admit a fg torsion-free metabelian subquotient of infinite rank ? We answer this question in the negative:


\begin{thm} \label{thm-intro}
	The group $G$ below is a fg $3$-step solvable group with the following properties:
	\begin{enumerate}
		\item $G$ is torsion-free;
		\item $G$ contains a normal subgroup that is free abelian of infinite rank;
		\item \label{item-secti-virtab} every finitely generated torsion-free metabelian subquotient of $G$ is virtually abelian. 
	\end{enumerate}
\end{thm}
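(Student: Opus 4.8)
The plan is to separate the three assertions by difficulty, since I expect (1) and (2) to be essentially read off from the construction while (3) carries all the content. For (1), torsion-freeness should follow from the way $G$ is built: consistent with the homeomorphism-group language of the paper, the natural route is to realize $G$ as a group of orientation-preserving homeomorphisms of the line (equivalently, to exhibit a left-invariant total order on $G$). A left-orderable group is torsion-free, so (1) is immediate once the action is described. For (2), I would simply point to the explicit free abelian subgroup $A \cong \Z^{(\infty)}$ that sits at the bottom of the $3$-step solvable structure and check directly from the definition of the action that it is normal and that the ambient generators preserve a $\Z$-basis of $A$ up to a permutation-like rule, so that $A$ is free abelian of infinite rank.

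For (3), I would first fix notation by writing a normal series $1 \lhd A \lhd B \lhd G$ refining the derived series, with $A$ the infinite-rank free abelian piece, $B/A$ and $G/B$ abelian (so $G/A$ is finitely generated metabelian), and $A$ a finitely generated $\Z[G/A]$-module — finitely generated as a module even though it has infinite rank as an abelian group. Let $Q = H/K$ be a finitely generated torsion-free metabelian subquotient. The first reduction is to put $Q$ in a usable normal form: replacing $H$ by a finitely generated subgroup and invoking Hall's theorem that finitely generated metabelian groups satisfy the maximal condition on normal subgroups, I would arrange that $Q'$ is a finitely generated module over $\Z[Q/Q']$ and carefully record how $H$ and $K$ meet the terms $A$ and $B$.

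The heart of the argument, and the step I expect to be the main obstacle, is to show that the infinite rank of $A$ cannot survive in $Q$. The mechanism I would try to exploit is that the infinite rank is \emph{switched on} only by the genuinely non-abelian part of the action of $G/A$ on $A$: passing to a metabelian subquotient forces the relevant layer of $G/A$ to act through an abelian quotient, over which the module $A$ degenerates to a finitely generated module of bounded rank. Concretely, the goal is to prove that the image of $H \cap B$ in $Q$ lies in a finite-rank subgroup and that the remaining direction contributes only boundedly, so that $Q$ has finite rank. The delicate point is that $K$ need not be aligned with the series $A \lhd B \lhd G$ (it may project nontrivially onto several layers at once), so this collapse has to be carried out at the level of modules and commutators rather than by naively intersecting $H$ and $K$ with $A$ and $B$.

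Finally, finite rank alone does not give virtual abelianness — for instance $\Z[1/p] \rtimes \Z$ has rank $2$ but is not virtually abelian — so the last step is an upgrade. Here I would use torsion-freeness of $Q$ together with the specific arithmetic of the defining action: any Baumslag--Solitar-type subquotient $\Z[1/m] \rtimes \Z$ would force a nontrivial scaling (eigenvalue) phenomenon that is incompatible with the scalars actually occurring in the action of $G/A$, which I expect to be (roots of unity, hence in the torsion-free setting) trivial. Ruling these out leaves only virtually abelian possibilities, completing (3). Throughout, Kropholler's theorem is a useful consistency check: it guarantees that $G$, having infinite rank, \emph{does} admit $C_p \wr \Z$ subquotients, so the torsion in those subquotients is genuinely unavoidable — which is precisely the phenomenon that the torsion-free metabelian subquotients are forced to collapse to virtually abelian ones.
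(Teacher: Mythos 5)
Your proposal has genuine gaps in items (1) and (3), and in both cases the missing step is the actual content of the paper. For torsion-freeness, the group $G$ is defined as a subgroup of $\Z\times\prod_n\bigl(D^{k_n}\rtimes C_{k_n}\bigr)$, not as a group of homeomorphisms of the line, and no left-invariant order is anywhere in sight: the ambient group is saturated with torsion (each $\pi_n(s_i)$ is an involution, and the $C_{k_n}$ factors are finite), so there is no structural reason for $G$ to be left-orderable, and the paper explicitly warns that for most choices of the parameters $(d_n),(k_n)$ the group $G$ \emph{does} contain elements of finite order. Torsion-freeness holds only for the specific choice $d_n=n+1$, and the proof is a delicate parity computation: for a nontrivial $g$ one reduces to the case where $g$ projects trivially to $\Z$ and nontrivially to $G/FC(G)\simeq C_2\wr\Z$, takes $M$ to be the top of the support of that configuration (normalized so the bottom is $0$), and shows via the homomorphisms $\varphi_a,\varphi_b\colon D\to C_2$ that the coordinate of $\pi_M(g)$ at position $M+1$ satisfies $\varphi_a=\varphi_b=1$, hence has infinite order. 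Declaring (1) ``immediate'' skips the central difficulty of the construction.

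For item (3), the paper's route is quite different from your module-theoretic rank collapse, and your sketch leaves its two key steps unestablished. The paper proves that $FC(G)$ is free abelian of infinite rank with $G/FC(G)\simeq C_2\wr\Z$, observes that the class of groups whose quotient by the FC-center is (locally finite abelian)-by-cyclic is closed under subquotients, and then shows that any finitely generated torsion-free metabelian group in that class is virtually abelian, using Hall's maximal condition on normal subgroups (so the FC-center is finitely generated), the fact that the FC-center of a torsion-free group is abelian, and an elementary lemma on central extensions; no tracking of how $K$ meets a normal series is ever needed. By contrast, your starting assumption that the infinite-rank piece $A$ is a finitely generated $\Z[G/A]$-module appears to be false for the natural choice $A=FC(G)$: its summands $(D')^{k_n}$ require a separate normal generator for each $n$, since every element of $A$ has finite support across the levels. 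And your final upgrade from ``finite rank'' to ``virtually abelian'' --- ruling out subquotients like $\Z[1/m]\rtimes\Z$ via ``the arithmetic of the action'' --- is precisely where an argument would be required and none is given; as you note yourself, finite rank plus torsion-free metabelian does not imply virtually abelian, so without this step the proof of (3) does not close. (Your use of Kropholler's theorem as a consistency check is fine, but it plays no role in the actual proof.)
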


Even before considering subquotients, if $G$ is a  fg $n$-step solvable group ($n\geq 3$) with a given property $\P$, it is natural to ask whether $G$ admits  fg  metabelian \textit{subgroups} that retain $\P$. Natural examples are provided by  \enquote{$\P = $ not being virtually abelian}, or  \enquote{$\P = $ not being virtually nilpotent}. A positive answer to that kind of questions generally allows to reduce the study of certain problems about fg solvable groups to fg  metabelian groups, which are generally much more tractable. When $\P$ is the property of having infinite rank, the following consequence of Theorem \ref{thm-intro} shows that the answer is negative. The existence of groups with this property is also new. 

\begin{cor}
The group $G$ is a fg torsion-free solvable group of infinite rank such that every fg  metabelian subgroup of $G$ is virtually abelian.
\end{cor}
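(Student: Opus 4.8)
The plan is to deduce all three assertions directly from Theorem \ref{thm-intro}, so the proof is essentially a matter of unwinding definitions. First, the fact that $G$ is a fg torsion-free solvable group is literally part of the statement of Theorem \ref{thm-intro} (it is a fg $3$-step solvable group by hypothesis, and torsion-free by item (1)), so nothing is required for this part.

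Next I would argue that $G$ has infinite rank. By Theorem \ref{thm-intro}(2), $G$ contains a normal subgroup $A$ that is free abelian of infinite rank. For every $n \geq 1$ the group $A$ contains a copy of $\Z^n$, which cannot be generated by fewer than $n$ elements (as witnessed, for instance, by the quotient $\Z^n / 2\Z^n \simeq (\Z/2\Z)^n$). Hence there is no uniform bound $k$ on the number of generators needed by the fg subgroups of $G$, which is precisely the statement that $G$ does not have finite rank.

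Finally, for the key point, I would observe that any subgroup $H \leq G$ is in particular a subquotient of $G$: taking $K = \{1\} \lhd H$ gives $H \simeq H/K$. Moreover, since $G$ is torsion-free by Theorem \ref{thm-intro}(1), every subgroup of $G$ is torsion-free as well. Consequently, if $H$ is a fg metabelian subgroup of $G$, then $H$ is automatically a fg torsion-free metabelian subquotient of $G$, and Theorem \ref{thm-intro}(\ref{item-secti-virtab}) applies to conclude that $H$ is virtually abelian.

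I do not expect any genuine obstacle: the entire content of the corollary is carried by Theorem \ref{thm-intro}, and the only things to verify are the elementary facts that subgroups are subquotients and that torsion-freeness passes to subgroups. The single point worth flagging is that the torsion-freeness hypothesis appearing in item (\ref{item-secti-virtab}) is satisfied for free precisely because we restrict to \emph{subgroups} (rather than arbitrary subquotients) of the torsion-free group $G$.
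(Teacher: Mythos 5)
Your proposal is correct and follows exactly the route the paper intends: the corollary is presented there as an immediate consequence of Theorem \ref{thm-intro}, with infinite rank witnessed by the free abelian normal subgroup of infinite rank and the subgroup statement obtained by viewing a fg metabelian subgroup as a torsion-free subquotient. Nothing is missing.
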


Recall that a solvable group has \textbf{finite torsion-free rank} if it admits a series with abelian quotients $G_i/G_{i+1}$ satisfying $\dim_\Q (G_i/G_{i+1} \otimes \Q) < \infty$. Another natural  question that arises from P. Kropholler's theorem is to ask whether every fg  solvable group of infinite torsion-free rank admits fg metabelian subquotients of infinite torsion-free rank.  This was answered in the negative by P. Kropholler, who constructed counter-examples in \cite{Kropholler85}. Despite having infinite torsion-free rank, these examples admit rather large subgroups containing torsion elements. The study of the class of solvable groups admitting no metabelian subquotient of infinite torsion-free rank is the subject of the recent work \cite{JacoboniKropholler} of Jacoboni--Kropholler. Theorem B in \cite{JacoboniKropholler} shows these groups enjoy strong structural restrictions.

\textbf{On the construction}. Our  family of groups from Theorem \ref{thm-intro}  is very much inspired by the construction of solvable groups of P. Kropholler \cite{Kropholler85}, Brieussel \cite{Brieu} and Brieussel--Zheng \cite{Brieu-Zhe}. The constructions from \cite{Brieu, Brieu-Zhe} take as inputs several parameters, including a sequence of groups $\Delta_n=\langle s_n, t_n\rangle$ coming with a specified pair of generators; and produce solvable groups $G$ such that on the one hand even if the groups $\Delta_n$ are very small (for instance finite or virtually abelian), the group $G$ might be much larger; and on the other hand good choices for the $\Delta_n$ allow some control on the algebraic structure of $G$. It was already observed in \cite{JacoboniKropholler} that a construction close to  \cite{Brieu} yields solvable groups of infinite torsion-free rank which do not admit metabelian subquotients of infinite torsion-free rank (like the groups in \cite{Kropholler85}). The construction has the property that in order to avoid metabelian subquotients of infinite torsion-free rank, it is necessary that the groups $\Delta_n$  have torsion elements, and as a consequence the groups $G$ from \cite{Brieu} and \cite{JacoboniKropholler} have torsion elements. The delicate aspect of our construction is to ensure torsion-freeness of the group $G$ while keeping the other properties intact.

\textbf{Acknowledgments}. We are very grateful to Peter Kropholler for decisive discussions regarding this problem, and for his comments on a preliminary version of this work. We thank J\'er\'emie Brieussel and Tianyi Zheng for a conversation on the return probability  on the groups constructed here. 

\section{Proofs}
 
\subsection{Torsion-free metabelian subquotients}

The purpose of this paragraph is to prove Proposition \ref{prop-fcext-tfmetab} below, which will be  used to check item \ref{item-secti-virtab} in Theorem \ref{thm-intro}. That proposition follows from standard arguments. We include proofs for completeness. 

Recall that the FC-center $FC(G)$ of a group $G$ is the subgroup of $G$ consisting of elements having a finite conjugacy class. A subgroup of $G$ is termed FC-central if it is contained in the FC-center of $G$. Recall also that a group is locally finite if all finitely generated subgroups are finite. 

In the sequel we denote by $\mathcal{L}$ the class of groups $G$ that admit a normal subgroup $L$ such that $L$ is locally finite and abelian, and $G/L$ is cyclic. We note that the class  $\mathcal{L}$ is stable under passing to subgroups and under taking quotients.

\begin{lem} \label{lem-FC-locfini-cyclic}
	Let $G$ be a finitely generated group that is torsion-free. If $G$ admits a finitely generated abelian normal subgroup $A$ such that $A$ is FC-central in $G$ and $G/A$ is in $\mathcal{L}$,  then $G$ is virtually abelian.
\end{lem}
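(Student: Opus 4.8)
The plan is to first reduce to the case where $A$ is central in $G$, and then to analyse the structure imposed by the hypothesis $G/A \in \mathcal{L}$.

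For the reduction, write $A = \langle a_1, \dots, a_k \rangle$. Since each $a_i$ lies in the FC-center, its centralizer $C_G(a_i)$ has finite index in $G$, so $C_G(A) = \bigcap_i C_G(a_i)$ has finite index in $G$. Set $H = C_G(A)$. Then $H$ is finitely generated and torsion-free, $A$ is a central (in particular normal) subgroup of $H$, and since $H/A$ is a subgroup of $G/A$ and $\mathcal{L}$ is stable under taking subgroups, we have $H/A \in \mathcal{L}$. As virtual abelianness passes from a finite-index subgroup up to the ambient group, it suffices to prove the statement for $H$. Thus I may and do assume that $A$ is central in $G$.

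Next, using $G/A \in \mathcal{L}$, choose a normal subgroup $\bar L \lhd G/A$ that is locally finite and abelian with $(G/A)/\bar L$ cyclic, and let $L$ be its preimage in $G$. Then $L \lhd G$, $A \leq L$, the quotient $L/A \cong \bar L$ is a torsion abelian group, and $G/L$ is cyclic. The key claim is that $L$ is abelian. Since $L/A$ is abelian we have $[L,L] \leq A$, and because $A$ is central the commutator descends to a well-defined biadditive map $\omega \colon L/A \times L/A \to A$, $\omega(xA, yA) = [x,y]$ (written additively in $A$). As $A$ is torsion-free while $L/A$ is torsion, biadditivity forces $\omega \equiv 0$: if $xA$ has finite order $m$, then $m\, \omega(xA, yA) = \omega(x^m A, yA) = 0$ in the torsion-free group $A$, because $x^m \in A$; hence $\omega(xA, yA) = 0$. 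Therefore $[L,L] = 1$ and $L$ is abelian.

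Finally I conclude from the two possibilities for the cyclic group $G/L$. If $G/L$ is finite, then $L$ is an abelian subgroup of finite index in $G$ and we are done. If $G/L \cong \Z$, write $G = L \rtimes_\phi \langle t \rangle$, where $\phi$ is the conjugation automorphism of $L$ by $t$. Here I use that $L$ has finite torsion-free rank: since $L/A$ is torsion, the inclusion $A \hookrightarrow L$ becomes an isomorphism after tensoring with $\Q$, so $L \otimes_\Z \Q \cong A \otimes_\Z \Q \cong \Q^{\rk A}$. Because $A$ is central, $\phi$ fixes $A$ pointwise, hence $\phi \otimes \mathrm{id}_\Q$ fixes $A \otimes_\Z \Q = L \otimes_\Z \Q$ pointwise and is the identity. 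For $x \in L$ this gives $(\phi(x) - x)\otimes 1 = 0$, i.e.\ $\phi(x) - x$ is torsion in $L$; torsion-freeness of $L$ then yields $\phi(x) = x$. Thus $\phi = \mathrm{id}$, the group $G = L \times \langle t\rangle$ is abelian, and in particular virtually abelian. The main obstacle is precisely this case $G/L \cong \Z$: unlike the finite case, $L$ need not have finite index, so one cannot argue directly. The crucial input is that the central, full-rank subgroup $A$ pins down the action of $t$ on $L \otimes_\Z \Q$, after which torsion-freeness upgrades a rationally trivial action to a genuinely trivial one. I would also take care to verify that the commutator pairing $\omega$ is genuinely well defined and biadditive, which relies exactly on $A$ being central and containing $[L,L]$.
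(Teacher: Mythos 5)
Your proof is correct, and its skeleton matches the paper's: reduce to the case where $A$ is central by passing to the finite-index subgroup $C_G(A)$, show that the preimage $L$ of the locally finite abelian part is abelian, and then show that the cyclic top acts trivially using torsion-freeness. Where you diverge is in the two key sub-steps. For the abelianness of $L$, the paper observes that $L$ is nilpotent of class at most $2$ and invokes the cited fact that the central quotient of a torsion-free nilpotent group is torsion-free, which forces the locally finite group $L/Z(L)$ to vanish; your commutator pairing $\omega\colon L/A\times L/A\to A$, killed because $L/A$ is torsion and $A$ is torsion-free, is an elementary and self-contained replacement that avoids the external reference. For the final step, the paper argues directly: for $x\in L$ some power $x^k$ is central, so $[t,x]^k=1$ and torsion-freeness gives $[t,x]=1$; your tensoring of $L$ with $\Q$ and the identification $A\otimes\Q\cong L\otimes\Q$ is the same idea in homological packaging (indeed $(\phi(x)x^{-1})^k=1$ is exactly what the vanishing in $L\otimes\Q$ encodes), at the modest cost of having to note that $L\to L\otimes\Q$ is injective for torsion-free $L$. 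You are also slightly more careful than the paper in explicitly separating the cases $G/L$ finite and $G/L\cong\Z$, whereas the paper writes $G=N\rtimes\langle t\rangle$ without comment. Both arguments are complete; yours trades one standard citation for a short direct computation.
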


\begin{proof}
	Since $A$ is finitely generated, its centralizer $C_G(A)$ has finite index in $G$. Hence upon replacing $G$ by a finite index subgroup, we can assume that $A$ is central in $G$. Let $N$ be a normal subgroup of $G$ containing $A$ and $t \in G$ such that $G = N \rtimes \left\langle t\right\rangle $ and $N/A$ is abelian. Since $A$ is central in $G$, $N$ is nilpotent. Since $N$ is torsion-free, the quotient $N/Z(N)$ of $N$ by its center is also torsion-free \cite[5.2.19]{Rob-course}. But here $N/Z(N)$ must be locally finite, so it follows that $N/Z(N)$ is trivial. So we deduce that $N$ is abelian. Now given $x \in N$, there exists $k \geq 1$ such that $x^k$ lies in the center of $G$. Hence $x^k = t x^k t^{-1} = (t x t^{-1})^k$. But $x$ and $t x t^{-1}$ commute since $N$ is abelian, so $[t,x]^k=1$. Since $G$ is torsion-free, we infer that $x$ commutes with $t$. Since $x$ was arbitrary in $N$ and $G = N \rtimes \left\langle t\right\rangle $, this shows that $G$ is abelian.
\end{proof}

We also denote by $\mathcal{C}$ the class of groups $G$ such that $G/FC(G)$ belongs to $\mathcal{L}$. 

\begin{prop} \label{prop-fcext-tf}
	Let $G$ be a finitely generated group that belongs to $\mathcal{C}$. If $G$ is torsion-free and metabelian, then $G$ is virtually abelian. 
\end{prop}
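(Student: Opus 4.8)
The plan is to verify the hypotheses of Lemma \ref{lem-FC-locfini-cyclic} with $A = FC(G)$: once we know $A$ is finitely generated, the lemma applies immediately, since $A$ is normal, FC-central by definition, abelian (shown below), and $G/A = G/FC(G)$ lies in $\mathcal{L}$ by the assumption $G \in \mathcal{C}$. Thus the whole content of the proof is to establish that $FC(G)$ is abelian and finitely generated.

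First I would record that $FC(G)$ is abelian. Given $x,y \in FC(G)$, set $H = \langle x,y\rangle$. Then $Z(H) = C_H(x) \cap C_H(y)$ has finite index in $H$, because $x,y \in FC(G)$ forces $C_G(x)$ and $C_G(y)$ to have finite index in $G$. So $H$ is center-by-finite, and by Schur's theorem $[H,H]$ is finite; as $G$ is torsion-free this gives $[H,H]=1$, i.e.\ $[x,y]=1$. Since $x,y$ were arbitrary, $FC(G)$ is abelian (and torsion-free).

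The core step is finite generation of $FC(G)$, and this is where I expect the only real difficulty. Write $G' = [G,G]$; as $G$ is metabelian, $G'$ is abelian, and conjugation makes it a module over the commutative ring $R = \mathbb{Z}[G/G']$. Because $G$ is finitely generated, $G/G'$ is a finitely generated abelian group, so $R$ is Noetherian, and $G'$ --- generated as a normal subgroup by the finitely many commutators of a generating set --- is a finitely generated, hence Noetherian, $R$-module. Now $FC(G) \cap G'$ is an $R$-submodule: for $x \in G'$, membership in $FC(G)$ means exactly that $x$ is fixed by a finite-index subgroup of $G/G'$ (the conjugation action factors through $G/G'$), and this condition is stable under sums and under the $R$-action. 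Choosing module generators $m_1,\dots,m_k$ of $FC(G)\cap G'$ and a finite-index subgroup $H \le G/G'$ fixing all of them, commutativity of $R$ shows $H$ fixes the entire submodule; hence the $R$-action factors through the finite group $(G/G')/H$, whose integral group ring is a finitely generated $\mathbb{Z}$-module. It follows that $FC(G)\cap G'$ is finitely generated over that ring, and therefore finitely generated as an abelian group.

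Finally, $FC(G)/(FC(G)\cap G')$ embeds into $G/G'$ and is thus finitely generated, so $FC(G)$ is an extension of one finitely generated abelian group by another and is itself finitely generated. With $A = FC(G)$ now seen to be finitely generated abelian, normal, FC-central, and $G/A \in \mathcal{L}$, Lemma \ref{lem-FC-locfini-cyclic} concludes that $G$ is virtually abelian. The single genuine obstacle is the finite generation of $FC(G) \cap G'$: in a finitely generated metabelian group $FC(G)$ need not be finitely generated a priori, and it is precisely the interplay of Noetherianity of $R$ with the fact that FC-elements of $G'$ are fixed by a finite-index subgroup --- which collapses the $R$-action to a finite quotient --- that rescues the argument.
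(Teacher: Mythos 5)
Your proof is correct, and its overall architecture is the same as the paper's: reduce to Lemma \ref{lem-FC-locfini-cyclic} applied to $A=FC(G)$ by showing that $FC(G)$ is a finitely generated abelian group. The difference lies entirely in how the two inputs are justified. The paper cites two external results: P.~Hall's theorem that finitely generated metabelian groups satisfy the maximal condition on normal subgroups (which yields finite generation of $FC(G)$, since a normal subgroup of the FC-center that is generated as a normal subgroup by finitely many elements is generated as a group by their finitely many conjugates), and the standard fact that the FC-center of a torsion-free group is abelian. You instead unpack both: the abelianness of $FC(G)$ via Schur's theorem applied to the center-by-finite group $\langle x,y\rangle$, and --- more substantially --- finite generation of $FC(G)\cap G'$ via Noetherianity of $G'$ as a $\mathbb{Z}[G/G']$-module together with the observation that FC-elements of $G'$ have finite-index stabilizers, so that the module action on $FC(G)\cap G'$ collapses to a finite quotient of $G/G'$. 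This is a correct and self-contained replacement for the appeal to Hall's max-$n$ theorem (it is essentially a specialization of Hall's Noetherian-module technique to the one submodule you need), at the cost of being longer; the paper's version buys brevity by quoting the general theorems.
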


\begin{proof}
	Since fg metabelian groups satisfy the maximal condition on normal subgroups \cite{Hall}, the subgroup $FC(G)$ is finitely generated. The group $G$ being torsion-free, $FC(G)$ is also abelian \cite[14.5.9]{Rob-course}. Hence it follows that $FC(G)$ is a finitely generated abelian group, and Lemma \ref{lem-FC-locfini-cyclic} then implies that $G$ is virtually abelian.
\end{proof}

We deduce the following. 

\begin{prop} \label{prop-fcext-tfmetab}
	If $G$ is a group in $\mathcal{C}$, then every finitely generated torsion-free metabelian subquotient of $G$ is virtually abelian. 
\end{prop}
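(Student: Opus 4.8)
The plan is to reduce everything to Proposition \ref{prop-fcext-tf} by showing that the class $\mathcal{C}$ is closed under passing to subquotients. Once this is known, the statement is immediate: if $Q = H/K$ is a finitely generated torsion-free metabelian subquotient of a group $G \in \mathcal{C}$, then $Q$ itself lies in $\mathcal{C}$, and Proposition \ref{prop-fcext-tf} applies verbatim to conclude that $Q$ is virtually abelian. So the core of the argument is the stability of $\mathcal{C}$ under subgroups and under quotients.

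The essential observation is that the FC-center behaves monotonically in the right directions. For a subgroup $H \le G$, any element of $H$ with finite $G$-conjugacy class a fortiori has finite $H$-conjugacy class, so $FC(G) \cap H \le FC(H)$. This yields a surjection $H/(FC(G)\cap H) \twoheadrightarrow H/FC(H)$, while the second isomorphism theorem identifies $H/(FC(G)\cap H)$ with a subgroup of $G/FC(G) \in \mathcal{L}$. Using that $\mathcal{L}$ is stable under both subgroups and quotients (as recorded before Lemma \ref{lem-FC-locfini-cyclic}), I would conclude $H/FC(H) \in \mathcal{L}$, that is, $H \in \mathcal{C}$.

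For a quotient $G/N$, the dual observation is that the image of an element of finite $G$-conjugacy class again has finite conjugacy class in $G/N$, so $FC(G)N/N \le FC(G/N)$. Then $(G/N)/FC(G/N)$ is a quotient of $G/(FC(G)N)$, which is in turn a quotient of $G/FC(G) \in \mathcal{L}$; stability of $\mathcal{L}$ under quotients then gives $G/N \in \mathcal{C}$. Combining the two cases shows that $\mathcal{C}$ is closed under subquotients, completing the reduction.

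I do not expect a genuine obstacle here: the two containments for the FC-center are formal, and the only point requiring care is to invoke the subgroup-closure and the quotient-closure of $\mathcal{L}$ at the appropriate moments. The substance of the proposition has already been absorbed into Proposition \ref{prop-fcext-tf}, so this statement amounts to the observation that the defining hypothesis of $\mathcal{C}$ is inherited by subquotients.
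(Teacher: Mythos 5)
Your proof is correct and follows exactly the paper's route: the paper likewise reduces to Proposition \ref{prop-fcext-tf} by asserting that $\mathcal{C}$ is stable under subgroups and quotients, a fact it leaves unproved and which you verify via the two standard containments $FC(G)\cap H\le FC(H)$ and $FC(G)N/N\le FC(G/N)$ together with the stability of $\mathcal{L}$.
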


\begin{proof}
	Let $K$ be a fg torsion-free metabelian subquotient of $G$. The class $\mathcal{C}$ being stable under taking subgroups and quotients, $K$ belongs to $\mathcal{C}$. The statement then follows from  Proposition \ref{prop-fcext-tf}.
\end{proof}

\subsection{Notation}
In the sequel we denote by $C_k = \Z / k \Z = \left\lbrace 0, \ldots, k-1 \right\rbrace $. Let $D = \left\langle a, b \, | \, a^2=b^2=1 \right\rangle $ be the infinite dihedral group, and $Z$ the infinite cyclic subgroup of $D$ generated by $ab$, so that $D = Z \rtimes \langle a \rangle = Z \rtimes \langle b \rangle$. We denote by $D'$ the derived subgroup of $D$, which is the index two subgroup of $Z$ generated by $(ab)^2$.

\subsection{The groups}

Choose two strictly increasing sequences $(d_n)$ and $(k_n)$ of positive integers such that $d_1 > 1$ and $k_n \geq 2d_n$. For $n \geq 1$, let $H_n=D^{k_n}\rtimes C_{k_n}$, where  the action of $C_{k_n}$ on $D^{k_n}$ is a cyclic permutation of the factors. 

Let $\Delta= \Z \times \prod_n H_n$. We denote by $\pi_0: \Delta\to \Z$ the projection from $\Delta$ to the first factor $\Z$, and for all $n \geq 1$ $\pi_n\colon  \Delta\to H_n$ is the projection from $\Delta$ to  $H_n$.

We define two elements $s, t\in \Delta$ as follows:

\begin{itemize}
	\item Let $\sigma_0$ be a generator of the first factor $\Z$ of $\Delta$. For $n \geq 1$, we denote by $\sigma_n$ the element of $H_n$ that belongs to the normal subgroup $D^{k_n}$ of $H_n$, and that is defined by $\sigma_n(1)=a$, $\sigma_n(d_n)=b$, and $\sigma_n(j)=id$ for every $j \in \left\lbrace 1, \ldots, k_n \right\rbrace$ such that $j \neq 1,d_n$. We denote by $s$ the element of $\Delta$ defined by $\pi_0(s) = \sigma_0$ and $\pi_n(s) = \sigma_n$ for all $n \geq 1$. 
	
	\item $t$  is the element of $\Delta$ defined by $\pi_0(t) = 0$, and $\pi_n(t)$ is the generator $1$ of $C_{k_n}$ for all $n \geq 1$. 
\end{itemize}

\begin{defin}
	We denote by $G$ the subgroup of $\Delta$ generated by $s$ and $t$. 
\end{defin}

The group $G$ depends on $(d_n)$ and $(k_n)$, but to simplify we omit $(d_n)$ and $(k_n)$ from the notation.

\begin{notation}
In the sequel for $i \in \Z$, we write $s_i=t^ist^{-i}$.
\end{notation}



\begin{lem} \label{lem-center}
	The element $s^2$ is central in $G$.
\end{lem}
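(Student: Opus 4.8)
The plan is to show directly that $s^2$ lies in the first direct factor $\Z$ of $\Delta = \Z \times \prod_n H_n$. That factor is central in $\Delta$, hence central in the subgroup $G$, so once this is established there is nothing more to check: since $G$ is generated by $s$ and $t$, centrality in $\Delta$ certainly gives centrality in $G$.

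First I would compute $s^2$ coordinate by coordinate via the projections $\pi_0$ and $\pi_n$. In the first factor one simply gets $\pi_0(s^2) = 2\sigma_0 \in \Z$ (writing the $\Z$-factor additively). For $n \geq 1$ the key observation is that $\pi_n(s) = \sigma_n$ lies in the normal subgroup $D^{k_n}$ and is supported on the two coordinates $1$ and $d_n$, which are distinct because $d_n \geq d_1 > 1$. On these coordinates $\sigma_n$ takes the values $a$ and $b$, which are involutions of $D$ sitting in different factors of $D^{k_n}$, and which therefore commute. Consequently $\pi_n(s^2) = \sigma_n^2$ has $a^2 = 1$ in coordinate $1$, $b^2 = 1$ in coordinate $d_n$, and the identity in all other coordinates, so $\sigma_n^2$ is trivial in $H_n$.

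Combining these computations, $s^2 = (2\sigma_0, \mathrm{id}, \mathrm{id}, \ldots)$ belongs to the first factor $\Z$, which commutes with every element of $\prod_n H_n$ and, being abelian, with itself. Hence $s^2 \in Z(\Delta)$, and in particular $s^2$ is central in $G$.

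There is no genuine obstacle here; the entire content is the remark that the generators $a, b$ of the dihedral group are involutions, so that the element $\sigma_n$ — although nontrivial — squares to the identity in each $H_n$. The hypothesis $d_n > 1$ is used precisely to guarantee that the involutions $a$ and $b$ occupy distinct coordinates of $D^{k_n}$; were $d_n = 1$, the value of $\sigma_n$ at coordinate $1$ would be $ab$, whose square is nontrivial, and the conclusion would fail.
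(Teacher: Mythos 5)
Your proof is correct and is essentially the paper's argument, just spelled out in more detail: the paper likewise observes that $\pi_n(s)=\sigma_n$ has order $2$ for every $n\geq 1$ (because $a$ and $b$ are involutions occupying distinct coordinates of $D^{k_n}$), so that $s^2$ lies in the central direct factor $\Z$ of $\Delta$. Nothing further is needed.
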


\begin{proof}
$s^2$  belongs to the first factor $\Z$ of $\Delta$ since $\pi_n(s)$ has order $2$ for all $n \geq 1$.
\end{proof}

\begin{lem} \label{lem-basis-properties}
	For all $i \geq 0$ the following hold:
	\begin{enumerate}
		\item \label{item-first} $\pi_n([s,s_i])$ belongs to $(D')^{k_n}$ for all $n \geq 1$.
		\item  \label{item-second}  If $n$ is such that $d_n-1 = i$ then the $d_n$-coordinate of $\pi_n([s,s_{i}])$ is equal to $(ba)^2$, and all other coordinates of of $\pi_n([s,s_{i}])$ are trivial.
		\item \label{item-third}  for all $n$ such that $d_n-1 > i$, $\pi_n([s,s_i])$ is trivial.
	\end{enumerate}
\end{lem}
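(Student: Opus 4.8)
The plan is to reduce everything to a coordinatewise computation inside $D^{k_n}$. First I would record that for $n \geq 1$ the map $\pi_n$ is a homomorphism with $\pi_n(t) = 1 \in C_{k_n}$, so that $\pi_n(s_i) = \pi_n(t)^i \pi_n(s) \pi_n(t)^{-i}$ is the image of $\sigma_n$ under the cyclic shift by $i$ induced by the $C_{k_n}$-action. Since this shift merely permutes the factors, $\pi_n(s_i)$ again lies in the normal subgroup $D^{k_n}$, and it is the element whose support is $\{1+i,\, d_n+i\}$ (indices mod $k_n$), with value $a$ at position $1+i$ and value $b$ at position $d_n+i$; recall that $\sigma_n$ has support $\{1, d_n\}$ with value $a$ at $1$ and $b$ at $d_n$. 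Consequently $\pi_n([s,s_i]) = [\sigma_n, \pi_n(s_i)]$ is computed coordinate by coordinate in $D$.

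Item \eqref{item-first} is then immediate: each coordinate of this coordinatewise commutator is a commutator of two elements of $D$, hence lies in $D' = [D,D]$, so $\pi_n([s,s_i]) \in (D')^{k_n}$ for every $n$ and every $i \geq 0$. The remaining two items reduce to locating the coordinates where both $\sigma_n$ and $\pi_n(s_i)$ are nontrivial, since a commutator $[x,y]$ in $D$ is trivial whenever $x = 1$, or $y = 1$, or $x = y$. I would therefore intersect the supports $\{1, d_n\}$ and $\{1+i,\, d_n+i\} \pmod{k_n}$ and examine the four possible coincidences $1 \equiv 1+i$, $1 \equiv d_n+i$, $d_n \equiv 1+i$, and $d_n \equiv d_n + i$ modulo $k_n$.

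For item \eqref{item-second}, with $i = d_n - 1$ the shifted support is $\{d_n,\, 2d_n - 1\}$. Using $d_n > 1$ together with $k_n \geq 2d_n$ (so that $2d_n - 1 < k_n$ and no wraparound occurs), I would check that the two supports meet in the single coordinate $d_n$, where $\sigma_n$ takes the value $b$ and $\pi_n(s_i)$ takes the value $a$; hence that coordinate equals $[b,a] = (ba)^2$ while all others are trivial. For item \eqref{item-third}, when $0 \leq i < d_n - 1$ the same case analysis shows the supports are disjoint unless $i = 0$, in which case they coincide but the two factors agree at each coordinate; in either case every coordinatewise commutator vanishes, so $\pi_n([s,s_i])$ is trivial.

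The only delicate point is the modular bookkeeping in the support intersection: I must rule out a \enquote{wraparound} coincidence such as $1 \equiv d_n + i \pmod{k_n}$ in the relevant range of $i$. This is exactly where the standing hypothesis $k_n \geq 2d_n$ is used: it forces $k_n + 1 - d_n \geq d_n + 1 > d_n - 1$, so the solution $i \equiv 1 - d_n \pmod{k_n}$ of that congruence lies outside the range $0 \leq i \leq d_n - 1$ under consideration. Once this is in place, items \eqref{item-second} and \eqref{item-third} follow by simply reading off the one (or zero) surviving coordinate.
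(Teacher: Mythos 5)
Your proposal is correct and follows essentially the same route as the paper's (much terser) proof: reduce to a coordinatewise commutator in $D^{k_n}$ and analyse the intersection of the supports $\{1,d_n\}$ and $\{1+i,\,d_n+i\}$, with $k_n\ge 2d_n$ ruling out wraparound coincidences. Your version just spells out the case analysis that the paper leaves implicit, and correctly identifies the surviving coordinate as $d_n$ with value $[b,a]=(ba)^2$.
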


\begin{proof}
\ref{item-first} is clear since $\pi_n(s_i)$ belongs to $(D)^{k_n}$ for all $n \geq 1$. When $d_n-1 = i$, the first statement of \ref{item-second} is a simple computation, and the second statement follows from the fact that the supports of $\pi_n(s)$ and $\pi_n(s_i)$ intersect only at coordinate $d_n-1 = i$ in view of the inequality $k_n \geq 2d_n$. \ref{item-third} is the true for the same reason.
\end{proof}


\begin{prop} \label{p-FC-extension}
The subgroup $FC(G)$ is free abelian of infinite rank, and $Q = G/ FC(G)$ is isomorphic to $C_2\wr \Z$.
\end{prop}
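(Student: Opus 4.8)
The plan is to exhibit an explicit normal subgroup $A \lhd G$, prove by hand that it is free abelian of infinite rank and FC-central, identify $G/A$ with $C_2\wr\Z$, and then use that the lamplighter has trivial FC-center to upgrade the inclusion $A\subseteq FC(G)$ to the equality $FC(G)=A$. Set $N=\langle s_i : i\in\Z\rangle$; this is the normal closure of $s$, it is normal in $G$, and $G/N\cong\Z$ is generated by the image of $t$ (which has infinite order in $\Delta$, hence in $G$). I would take as candidate
\[
A=\langle s^2\rangle\,[N,N].
\]
The first observation is that $A$ lies inside the free abelian group $\Z\times\bigoplus_n (D')^{k_n}$: by Lemma~\ref{lem-center} the element $s^2$ sits in the first factor $\Z$, while for $i\neq j$ the commutator $[s_i,s_j]$ has trivial first coordinate and, on each block $H_n$, lands in $(D')^{k_n}$ and is supported only where the two-point supports of $\pi_n(s_i)$ and $\pi_n(s_j)$ overlap. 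Since $d_n\to\infty$ this overlap is empty for all but finitely many $n$, so $[N,N]\subseteq\bigoplus_n(D')^{k_n}$ has finite support. Consequently $A$ is abelian, and as a subgroup of a free abelian group it is free abelian; it is normal in $G$ because $\langle s^2\rangle$ is central and $[N,N]$ is characteristic in the $t$-invariant subgroup $N$.

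Next I would prove $A$ has infinite rank using Lemma~\ref{lem-basis-properties}. The elements $c_n:=[s,s_{d_n-1}]$ belong to $[N,N]\subseteq A$, and by parts~\ref{item-second}–\ref{item-third} each $c_n$ is supported on blocks $m\le n$ and has nonzero block-$n$ coordinate equal to $(ba)^2$ at position $d_n$. This triangular pattern forces the $c_n$ to be $\Z$-independent: in any vanishing relation the top block cannot be cancelled by the others, and $(D')^{k_n}$ is torsion-free. Hence $A$ contains infinitely many independent elements and has infinite rank.

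For the quotient, since $[s_i,s_j]\in A$ and $s_i^2=t^is^2t^{-i}\in A$, the images $\bar s_i$ in $G/A$ are commuting involutions cyclically permuted by $\bar t$, and $\bar t$ has infinite order because $A\subseteq N$; thus $G/A$ is a quotient of $C_2\wr\Z$. The crucial point is that it is \emph{not} a proper quotient, i.e. the $\bar s_i$ are independent. I would detect this with the homomorphisms $\rho_n\colon G\to C_2\wr C_{k_n}$ obtained from $G\le\Delta\to H_n$ followed by $D\to D/Z\cong C_2$ on each coordinate: here $\rho_n(s_i)$ is the pair of lamps at positions $1+i$ and $d_n+i$ (mod $k_n$) with trivial cursor, and one checks $\rho_n(s^2)=0$ and $\rho_n([s_i,s_j])=0$, so each $\rho_n$ factors through $G/A$. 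If $\prod_{i\in S}s_i\in A$ for a finite $S\subseteq\Z$, then $\sum_{i\in S}(e_{1+i}+e_{d_n+i})=0$ in $C_2^{k_n}$ for every $n$; choosing $n$ with $d_n$ exceeding the diameter of $S$ and $k_n\ge 2d_n$ so that no wraparound occurs, the two clusters of lamps are disjoint, which forces $S=\emptyset$. This yields $G/A\cong C_2\wr\Z$. To finish, $A\subseteq FC(G)$ because $A$ is abelian and normal and $G$ acts on $A\subseteq\Z\times\bigoplus_n\Z^{k_n}$ through cyclic shifts within each block (conjugation by $t$) and sign flips at the two positions $1,d_n$ of block $n$ (conjugation by $s$, since $a,b$ invert $D'$); on a finitely supported element these generate a finite group of signed permutations, so every element of $A$ has finite conjugacy class. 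Since $FC(C_2\wr\Z)=1$ (any nontrivial element of the lamplighter has infinite conjugacy class), the image of $FC(G)$ in $G/A$ is trivial, hence $FC(G)\subseteq A$, and therefore $FC(G)=A$.

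I expect the main obstacle to be the independence argument of the third paragraph: it is the only step that genuinely uses the hypotheses $k_n\ge 2d_n$ and $d_n\to\infty$, and it is exactly what guarantees that $Q=G/FC(G)$ is the full lamplighter rather than a smaller quotient. The remaining steps — abelianness, the triangular independence giving infinite rank, the finiteness of conjugacy orbits, and the triviality of $FC(C_2\wr\Z)$ — are comparatively routine verifications once the supports of the commutators $[s_i,s_j]$ and the $s$- and $t$-actions have been pinned down.
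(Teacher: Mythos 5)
Your proof is correct, and it follows the same overall skeleton as the paper's (exhibit an abelian normal FC-central subgroup $A$, identify $G/A$ with $C_2\wr\Z$, and use triviality of $FC(C_2\wr\Z)$ to get $FC(G)=A$), but the two technically substantive steps are carried out by genuinely different means. The paper takes $A=\langle\sigma_0^2\rangle\oplus\bigoplus_n (D')^{k_n}$ and its main effort is an induction on $n$, based on Lemma \ref{lem-basis-properties}, showing that $G$ actually contains each full block $(D')^{k_n}$; infinite rank is then immediate, and the injectivity of $C_2\wr\Z\to G/A$ is dispatched in one line. You instead take the intrinsically defined $A=\langle s^2\rangle[N,N]$ (which is manifestly inside $G$, so no induction is needed), and you must then work at the two places the paper gets for free: you prove infinite rank via the triangular family $c_n=[s,s_{d_n-1}]$, and you prove that $G/A$ is the \emph{full} lamplighter via the detecting homomorphisms $\rho_n\colon G\to C_2\wr C_{k_n}$ together with the support-disjointness argument using $k_n\ge 2d_n$ and $d_n\to\infty$. (Your FC-centrality argument, via finite orbits of signed block permutations, also differs mildly from the paper's, which uses the finite-index subgroup $\{g: \pi_n(g)\in (D')^{k_n}\}$ centralizing the block.) The trade-off: the paper's route yields the explicit description of $FC(G)$ as the full direct sum, which it reuses later (e.g.\ in the remark on recovering $(k_n)$ from the representation on $FC(G)\otimes\Q$); your route avoids the induction and makes fully explicit the faithfulness of the lamplighter quotient, which the paper only asserts with ``we easily see''. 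In fact the paper's induction shows $\bigoplus_n(D')^{k_n}\subseteq[N,N]$, so the two candidate subgroups coincide, but your argument never needs to know this.
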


\begin{proof}
Let $R=\bigoplus_{n\ge 1} (D')^{k_n}$. So $R$ is the subgroup of $ \prod_n H_n$ consisting of elements having all their coordinates in $(D')^{k_n}$, and only a finite number of these coordinates are non-trivial. Set $A = \langle \sigma_0^2 \rangle \oplus R$. We claim that $G$ contains $A$. Since $\pi_n(s) = \sigma_n$ has order two for all $n \geq 1$ and $\pi_0(s^2) = \sigma_0^2$, it is enough to see that $R$ lies inside $G$, or equivalently that $G$ contains $(D')^{k_n}$ for all $n\geq1$. Since $\pi_n(t)$ permutes transitively the factors of $D^{k_n}$, it is actually enough to see that $G$ contains the copy of $D'$ located at the $d_n$-coordinate in $D^{k_n}$. Arguing by induction on $n$, we see that this follows from Lemma \ref{lem-basis-properties}. 

We note that $A$ is normal in $G$, as it is actually normalized by the entire group $\Delta$. We shall now check that $A$ lies in $FC(G)$. Since $s^2$ in central in $G$ (Lemma \ref{lem-center}), it is enough to check that $(D')^{k_n}$ is FC-central in $G$ for all $n \geq 1$. For a fixed $n \geq 1$, the subgroup $G_n$ of $G$ consisting of elements $g$ such that $\pi_n(g) \in (D')^{k_n}$ is a finite index subgroup of $G$ since $D'$ has finite index in $D$ and $C_{k_n}$ is finite. Moreover $G_n$ commutes with $(D')^{k_n}$ because $D'$ is abelian, so $(D')^{k_n}$  indeed lies in the FC-center of $G$.

We shall now verify that $Q = G/A$ is isomorphic to $C_2\wr \Z$. If we denote by $\overline{s_i}$ the image of $s_i$ in $Q$, then $\overline{s_i}$ has order two for all $i$, and it follows from Lemma \ref{lem-basis-properties} that $[\overline{s},\overline{s_i}]$ is trivial for all $i$. This implies that the map from  $C_2\wr \Z$ to $Q$ that sends the Dirac function at the identity to $\overline{s}$  and the generator of $\Z$ to $t$ induces a surjective group homomorphism. Moreover we easily see that no $\overline{s_i}$ belongs to the subgroup generated by the $\overline{s_j}$ for $j < i$. This implies that $C_2\wr \Z \to Q$ is an isomorphism. And since $C_2\wr \Z$ has trivial FC-center, we also see that $A = FC(G)$.
\end{proof}

We deduce the following:

\begin{cor} \label{cor-Ghasnosubquotient}
Every finitely generated torsion-free metabelian subquotient of $G$ is virtually abelian. 
\end{cor}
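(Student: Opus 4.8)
The plan is to deduce Corollary \ref{cor-Ghasnosubquotient} from the two general results already established, namely Proposition \ref{prop-fcext-tfmetab} and Proposition \ref{p-FC-extension}. The key observation is that Proposition \ref{prop-fcext-tfmetab} applies to any group in the class $\mathcal{C}$, and that $\mathcal{C}$ is precisely the class of groups $G$ for which $G/FC(G)$ lies in $\mathcal{L}$. So the entire task reduces to checking that $G$ belongs to $\mathcal{C}$, i.e.\ that the quotient $Q = G/FC(G)$ lies in the class $\mathcal{L}$ of groups admitting a locally finite abelian normal subgroup with cyclic quotient.

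The crucial input is Proposition \ref{p-FC-extension}, which identifies $Q = G/FC(G)$ with the lamplighter group $C_2 \wr \Z$. I would therefore verify that $C_2 \wr \Z$ belongs to $\mathcal{L}$. Indeed, writing $C_2 \wr \Z = \left(\bigoplus_{\Z} C_2\right) \rtimes \Z$, the base subgroup $L = \bigoplus_{\Z} C_2$ is an abelian normal subgroup, it is locally finite since every finitely generated subgroup is a finite direct sum of copies of $C_2$ and hence finite, and the quotient $(C_2 \wr \Z)/L \cong \Z$ is cyclic. Thus $C_2 \wr \Z$ is in $\mathcal{L}$, and consequently $G$ is in $\mathcal{C}$.

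With $G \in \mathcal{C}$ established, Proposition \ref{prop-fcext-tfmetab} applies directly and yields that every finitely generated torsion-free metabelian subquotient of $G$ is virtually abelian, which is exactly the assertion of the corollary. In summary, the structure is: (i) recall that membership in $\mathcal{C}$ means $G/FC(G) \in \mathcal{L}$; (ii) invoke Proposition \ref{p-FC-extension} to identify $G/FC(G)$ with $C_2 \wr \Z$; (iii) observe $C_2 \wr \Z \in \mathcal{L}$ via its base subgroup; (iv) conclude by Proposition \ref{prop-fcext-tfmetab}.

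I do not anticipate any genuine obstacle here, since all the substantive work has been done in the preceding propositions; the only point requiring a moment's care is confirming that the base group of the lamplighter is locally finite and abelian with cyclic quotient, which is immediate. The corollary is thus a clean packaging of Proposition \ref{p-FC-extension} with the abstract criterion of Proposition \ref{prop-fcext-tfmetab}.
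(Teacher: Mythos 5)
Your proposal is correct and follows exactly the paper's route: the paper's proof is a one-line deduction from Propositions \ref{prop-fcext-tfmetab} and \ref{p-FC-extension}, and your argument simply makes explicit the (correct and easy) intermediate check that $C_2\wr\Z$ lies in $\mathcal{L}$ so that $G\in\mathcal{C}$.
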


\begin{proof}
This follows from Propositions \ref{prop-fcext-tfmetab} and \ref{p-FC-extension}.
\end{proof}

\subsection{Torsion-freeness}
The construction of the group $G$ above is a variation of Brieussel's construction in \cite{Brieu}, which involves three sequence $(\ell_n)$, $(d_n)$ and $(k_n)$, and the groups $H_n=D_{\ell_n}^{k_n} \rtimes C_{k_n}$, where $D_{\ell_n}$ are finite dihedral groups. This also leads to FC-central extensions of $C_2 \wr \Z$, but with the property that $FC(G)$ is locally finite (and hence torsion). With the hope to avoid the appearance of torsion, in our definition the finite dihedral groups have been replaced by the infinite dihedral group $D$, and the extra factor $\Z$ has been added in order to make the generator $s$ of infinite order.  But at this point torsion-freeness of the group $G$ is very much not guaranteed. Observe that the projection of $G$ to each factor $H_n$ contains many elements of finite order (for instance $\pi_n(s_i)\in H_n$ is an involution for every $n$ and every $i$). One might expect that by taking appropriate  commutators of such elements (so as to arrange in particular the projection to the factor $\Z$ to be trivial), it should be easy to find elements of finite order in $G$. And indeed for most choices of the sequences $(d_n), (k_n)$, the group $G$ will contain many elements of finite order. Perhaps surprisingly, it turns out that for the specific choice $d_n=n+1$, the group $G$ is torsion-free. The goal of this paragraph is to prove this assertion.

\begin{defin}
	We denote $\varphi_a\colon D\to C_2$ the epimorphism such that  $\varphi_a(a)=1$ and $\varphi_a(b)=0$ (here and below we use additive notation for $C_2=\{0, 1\}$, so that $1$ denotes its generator). Similarly we denote $\varphi_b\colon D\to C_2$ the epimorphism such that $\varphi_b(b)=1$ and $\varphi_b(a)=0$.
\end{defin} 

Thus $\varphi_a(g)$ is given by the parity of the number of $a$'s in any word representing $g$, and similarly for $\varphi_b(g)$. We will use the following easy lemma. 

\begin{lem}\label{l-dihedral-infinite-order}
	Let $\gamma \in D$ be such that $\varphi_a(\gamma)=\varphi_b(\gamma)=1$. Then $\gamma$ has infinite order. 
\end{lem}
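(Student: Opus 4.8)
The plan is to analyze the structure of the infinite dihedral group $D = Z \rtimes \langle a \rangle$ and use the two sign homomorphisms $\varphi_a, \varphi_b$ to pin down exactly which elements of $D$ can have finite order. Recall that $D$ is torsion-free on its translation part $Z = \langle ab \rangle$, and every element of $D$ lies in one of two cosets: either it lies in $Z$ (a ``translation''), or it lies in the nontrivial coset $Za$ (a ``reflection''). The only torsion elements of $D$ are the reflections (which are the involutions) together with the identity; every nontrivial translation has infinite order. So the statement will follow once I show that an element $\gamma$ with $\varphi_a(\gamma) = \varphi_b(\gamma) = 1$ is forced to be a nontrivial translation, i.e.\ to lie in $Z \setminus \{1\}$.

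First I would compute $\varphi_a$ and $\varphi_b$ on the generator $ab$ of $Z$: since $\varphi_a$ and $\varphi_b$ are homomorphisms to $C_2$ and are additive, $\varphi_a(ab) = \varphi_a(a) + \varphi_a(b) = 1 + 0 = 1$ and $\varphi_b(ab) = \varphi_b(a) + \varphi_b(b) = 0 + 1 = 1$. Thus on $Z = \langle ab\rangle$ both homomorphisms agree with reduction of the exponent mod $2$: an element $(ab)^m \in Z$ satisfies $\varphi_a = \varphi_b = m \bmod 2$. In particular, on translations the two homomorphisms always take equal values. By contrast, a reflection has the form $(ab)^m a$, on which $\varphi_a = m + 1$ and $\varphi_b = m$, so the two values differ. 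Hence the hypothesis $\varphi_a(\gamma) = \varphi_b(\gamma)$ already forces $\gamma$ to be a translation, $\gamma \in Z$.

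Given $\gamma = (ab)^m \in Z$, the further hypothesis $\varphi_a(\gamma) = 1$ gives $m \equiv 1 \pmod 2$, so in particular $m$ is odd and hence nonzero; therefore $\gamma$ is a nontrivial element of the infinite cyclic group $Z$, and so has infinite order. This completes the argument.

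I do not expect any genuine obstacle here: the only thing to be careful about is to justify cleanly the dichotomy between translations and reflections and the claim that $\varphi_a$ and $\varphi_b$ agree precisely on $Z$. All of this is immediate from the presentation $D = \langle a,b \mid a^2 = b^2 = 1\rangle$ together with $D = Z \rtimes \langle a\rangle$, so the proof is a short case analysis rather than a computation.
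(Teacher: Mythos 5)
Your proof is correct and follows essentially the same route as the paper: both arguments reduce to the fact that the torsion elements of $D$ are the identity and the involutions, on which $\varphi_a$ and $\varphi_b$ cannot both equal $1$. The only cosmetic difference is that you verify this via the normal forms $(ab)^m$ and $(ab)^m a$ coming from $D = Z \rtimes \langle a\rangle$, whereas the paper observes that every involution is conjugate to $a$ or $b$ and hence has image $(1,0)$ or $(0,1)$ under $(\varphi_a,\varphi_b)$.
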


\begin{proof}
	Let $\varphi\colon D \to C_2\times C_2$ be given by $\varphi=(\varphi_a, \varphi_b)$. Recall that an element  $g\in D$ has order 2 if and only if it is conjugate either to $a$ or to $b$, thus $\varphi(g)\in \{(1, 0), (0, 1)\}$. Thus the condition $\varphi(\gamma)=(1, 1)$ tells at once that  $\gamma$ is not trivial (since $\varphi(\gamma)$ is not-trivial) and not an involution, and therefore has infinite order.
\end{proof}

\begin{prop} \label{prop-torsionfreeness}
Suppose that $d_n=n+1$ for all $n \geq 1$. Then the group $G$ is torsion-free.
\end{prop}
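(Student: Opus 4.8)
The plan is to show that every element $g\in G$ of finite order is trivial. The starting point is Proposition \ref{p-FC-extension}: since $Q=G/FC(G)\cong C_2\wr\Z$ and the kernel $FC(G)$ is free abelian, the image $\bar g$ of a torsion element must lie in the base $\bigoplus_\Z C_2$ of the wreath product, because the $\Z$-quotient of $Q$ is torsion-free. Writing $\bar g=\sum_{i\in I}\overline{s_i}$ for a finite set $I\subseteq\Z$, the case $I=\emptyset$ is immediate: then $g\in FC(G)$, which is torsion-free, so $g=1$. The whole content is therefore to rule out a nontrivial torsion element with $I\neq\emptyset$, which I would do by exhibiting a coordinate of some projection $\pi_n(g)$ that has infinite order.

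The main tool is the pair of parity homomorphisms $\varphi_a,\varphi_b\colon D\to C_2$ together with Lemma \ref{l-dihedral-infinite-order}. First I would fix a representative $g=a'\prod_{i\in I}s_i$ with $a'\in FC(G)$. Since for $n\geq1$ the projection $\pi_n(a')$ lies in $(D')^{k_n}$ and $\varphi_a,\varphi_b$ vanish on $D'$, the factor $a'$ does not affect the parities $\varphi_a,\varphi_b$ of any coordinate of $\pi_n(g)$. Because $\pi_n(s_i)$ is the configuration carrying the letter $a$ at position $1+i$ and the letter $b$ at position $d_n+i$ (indices mod $k_n$), a direct computation gives, for the $j$-th coordinate $\gamma_{n,j}\in D$ of $\pi_n(g)$,
\[
\varphi_a(\gamma_{n,j})=c(j-1),\qquad \varphi_b(\gamma_{n,j})=c(j-d_n),
\]
where $c(\rho)=\#\{i\in I:i\equiv\rho\ (\mathrm{mod}\ k_n)\}\bmod 2$. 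By Lemma \ref{l-dihedral-infinite-order} it then suffices to find some $n\geq1$ and some coordinate $j$ for which both counts are odd, i.e.\ $c(j-1)=c(j-d_n)=1$: such a $\gamma_{n,j}$ has infinite order, whence so does $g$.

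The crux is the choice of $n$, and this is exactly where the hypothesis $d_n=n+1$ enters. Projecting to the first factor $\Z$ shows $\pi_0(g)=(\text{$s$-exponent sum of }g)\,\sigma_0$, which must vanish for a torsion element; since the $s$-exponent sum is congruent mod $2$ to $|I|$, this forces $|I|$ to be even, so in particular $\delta:=\max I-\min I\geq1$. I would then take $n=\delta$. The identity $d_n=n+1$ makes the gap $d_n-1=\delta$ equal to the diameter of $I$, while the standing inequality $k_n\geq 2d_n>\delta$ guarantees that distinct elements of $I$ occupy distinct residues mod $k_n$, so that $c(\rho)=1$ precisely at the residues of the elements of $I$. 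Taking $j$ with $j-1\equiv\max I$ then yields $c(j-1)=1$ and $c(j-d_n)=c(\max I-\delta)=c(\min I)=1$, producing the desired infinite-order coordinate and the contradiction. The delicate point — and the reason torsion does appear for generic $(d_n)$ — is precisely this matching: one needs the available separation $d_n-1$ to realize a genuine difference between two elements of $I$ that are isolated in their own residue classes, and the choice $d_n=n+1$ is what makes every possible diameter $\delta$ attainable as some $d_n-1$ while $k_n$ stays large enough to keep the residues apart.
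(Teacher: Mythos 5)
Your proof is correct and follows essentially the same route as the paper's: both reduce a putative torsion element to one whose image in $G/FC(G)\cong C_2\wr\Z$ is a configuration with nonempty support of even cardinality, then project to $H_n$ for $n$ equal to the diameter of that support (so that $d_n-1$ matches it) and apply Lemma \ref{l-dihedral-infinite-order} to the coordinate at position $\max+1$, where both $\varphi_a$ and $\varphi_b$ evaluate to $1$. The only differences are cosmetic (you use the decomposition $g=a'\prod_{i\in I}s_i$ with $a'\in FC(G)$ killed by the parity maps, where the paper manipulates a word $\prod s_{i_j}^{\epsilon_j}t^m$ and normalizes $\min I=0$ by conjugation).
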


\begin{proof}
We retain the above notation, so that $A = FC(G)$ and $Q = G/A$.  Given $g\in G$ non-trivial, we show that  $g$ has infinite order. Write $g$ as a word in $s^{\pm 1}, t^{\pm 1}$ and, by a standard  argument,   replace this word by an equivalent word in the free group generated by $s, t$ to write $g$ as
\[g=s_{i_1}^{\epsilon_1}\cdots s_{i_k}^{\epsilon_k} t^m,\]
with $i_j,\epsilon_i, m\in \Z$. Note that   an expression of $g$ of the form above  is far from being unique (in particular, there might be repetitions in the indexes $i_j$).
 
If $m\neq 0$, then the projection of $g$ to $Q$ has infinite order, and thus so does $g$. Therefore we can assume that $m=0$. Similarly we note that if $\sum_{j=1}^k \epsilon_j\neq 0$, then $\pi_0(g)\in \Z$ is non-trivial, and thus again $g$ has infinite order. Therefore we can suppose that $g$ has the form
\begin{equation} g=s_{i_1}^{\epsilon_1}\cdots s_{i_k}^{\epsilon_k}, \quad\quad   \epsilon_j\in \Z, \quad \sum_j \epsilon_j=0.\label{e-element-g} \end{equation}
 For every $\ell\in \Z$ let $\alpha(\ell)\in C_2$ be given by
\[\alpha(\ell)=\sum_{j\colon i_j=\ell} \epsilon_j \quad \text{(mod 2)}.\]
Note that the projection of $g$ to $G/A\simeq C_2 \wr \Z$ coincides with  the configuration $\alpha \in \oplus_\Z C_2$. If the latter is the trivial configuration, we have $g\in A$, and thus $g$ has infinite order by Proposition \ref{p-FC-extension}. Therefore we can assume that this is not the case, so that there exists some $\ell$ such that $\alpha(\ell)=1$. On the other hand by definition of $\alpha$ we have
\[\sum_{j\in \Z} \alpha(j)=\sum_{i=1}^k \epsilon_k= 0 \quad \text{ (mod 2)},\]
so we deduce that the set
 \[E=\{\ell\in \Z \colon \alpha(\ell) =1\}\]
is non-empty and has even cardinality. Thus $|E|\ge 2$. Let $m$ and $M$ be respectively the smallest and largest element of $E$. Upon conjugating $g$ by $t^{-m}$, we can suppose that $m=0$, while $M>0$. To show that $g$ has infinite order, we will show that the projection $\pi_M(g)\in H_M$ has infinite order. To this end note that for every $i\in \Z$ the element $\pi_M(s_i)$ belongs to the normal subgroup $D^{k_M}$ of $H_M$, and  is equal to the element $r_i\in  D^{k_M}$ given by
\begin{equation} r_i(x)=\left\{\begin{array}{lr}a & \text{ if } x=i+1  \quad (\text{mod }k_M)\\
b& \text{ if } x=M+i+1 \quad (\text{mod } k_M)\\
e & \text{ otherwise.}\end{array} \right.\label{e-ri}\end{equation}
From \eqref{e-element-g} it follows that $\pi_M(g)$ is equal to the element $f\in D^{k_M}$ given by 
\[f=r_{i_1}^{\epsilon_1}\cdots r_{i_k}^{\epsilon_k}.\]
We wish to show that the coordinate in $f(M+1)\in D$ is an element of infinite order of $D$ by applying  Lemma \ref{l-dihedral-infinite-order}.  To this end, note that  $\varphi_b(f(M+1))$ is determined by the parity of the number of $b$s that appear in the expression 
\[f(M+1)=r_{i_1}^{\epsilon_1}(M+1)\cdots r_{i_k}^{\epsilon_k}(M+1).\]
By \eqref{e-ri}, we have $r_j(M+1)=b$ if and only if $i_j\in k_M\Z$. Thus
\[\varphi_b(f(M+1))=\sum_{j\colon i_j\in k_M \Z} \epsilon_j=\sum_{\ell\in k_M \Z}\alpha(\ell)\quad  \text{ (mod 2).}\]
Now recall that $0$ and $M$ are respectively the minimum and maximum values of $\ell\in \Z$ such that $\alpha(\ell)=1$ (mod 2). Since $k_M>2M+1>M$, it follows that in the previous sum there is exactly one term equal to 1 (mod 2), namely $\alpha(0)$. Therefore $\varphi_b(f(M+1))=1$. 
A similar reasoning shows that 
\[\varphi_a(f(M+1))=\sum_{j\colon i_j\in k_M \Z+M} \epsilon_j=\sum_{\ell\in k_M \Z+M}\alpha(\ell)\quad  \text{ (mod 2),}\]
and again, in the last sum there is exactly one term equal to 1 (mod 2) namely, $\alpha(M)$. Thus $\varphi_a(f(M+1))=1$.
By Lemma \ref{l-dihedral-infinite-order}, $f(M+1)$ has infinite order. Therefore so does $f=\pi_M(g)$, and so does $g$. \qedhere
 \end{proof}

\subsection{Conclusion}

The proof of Theorem \ref{thm-intro} is now complete, as it follows from  Proposition \ref{prop-torsionfreeness}, Proposition  \ref{p-FC-extension} and Corollary \ref{cor-Ghasnosubquotient}.

\begin{remark}
By letting the sequence $(k_n)$ vary, one can obtain a continuum of isomorphism classes of groups satisfying Theorem \ref{thm-intro}. Here is a sketch of proof. The conjugation action of $G$ on its FC-center $A = FC(G)$ defines a linear representation on the $\Q$-vector space $V=A\otimes \Q$,  which, by the argument in Proposition \ref{p-FC-extension}, decomposes as a sum of finite dimensional representations $V=\Q\oplus( \bigoplus_n V_{n})$, with $V_{n}=( D')^{k_n} \otimes \Q$. Assuming that all $k_n$ are prime numbers, one verifies that the representation $V_n$ is irreducible. So by Schur's lemma it follows that $(k_n)$ is exactly the sequence of dimensions of irreducible finite dimensional subrepresentations of $V$ of dimension at least $2$. In particular the sequence $(k_n)$ can be reconstructed from $G$ abstractly. 
\end{remark}

\begin{remark}
The fact that the group $G$ has infinite rank is witnessed by the existence of a (sub)quotient isomorphic to $C_2\wr \Z$. It would be interesting to have a construction, for an arbitrary prime $p$, of a group $G$ as in Theorem \ref{thm-intro} which admits $C_p\wr \Z$ as a subquotient. We thank the referee for pointing out this question. 
\end{remark}

\subsection{Final comments}

Our initial motivation for wondering about the existence of (non-virtually abelian) fg torsion-free solvable groups with every fg  metabelian subgroup virtually abelian, grew out from the work \cite{LBMB-growth-solv}. There we prove that if $G$ is a fg torsion-free metabelian group, then the growth of every faithful action of $G$ (as defined in \cite{LBMB-growth-solv}) is at least quadratic, provided $G$ is not virtually abelian. We also conjecture that the same holds for torsion-free solvable groups. Since the desired property passes to overgroups, the metabelian case automatically implies that the conjecture holds for the solvable groups that contain a fg metabelian subgroup that is not virtually abelian. The examples presented in this note show that this does not include all torsion-free solvable groups. 

Finally we mention that another motivation for a better understanding of the class of torsion-free solvable groups comes from the study of random walks and isoperimetry.   Pittet and Saloff-Coste showed that on any torsion-free solvable group of finite rank which is not virtually nilpotent, the return probability of the simple random walk has the slowest possible decay for a group of exponential growth, namely $\exp(-n^{\frac{1}{3}})$  \cite{Pittet-Saloff-Coste}. They  conjectured that, conversely, this decay should characterise groups of finite rank among torsion-free solvable groups  \cite{Saloff-Coste-conj}. This conjecture is naturally related to the theory surrounding Kropholler's theorem and its possible generalisations, as the return probability does not decrease when passing to subquotients. But the groups constructed here seem unlikely to be counterexamples to the conjecture.  

\bibliographystyle{amsalpha}
\bibliography{bib-torsionfree}

\end{document}